\documentclass{article}
\usepackage{amsmath}
\usepackage{amscd}
\usepackage{amsthm}
\usepackage{amssymb} \usepackage{latexsym}
\usepackage{eufrak}
\usepackage{euscript}
\usepackage{epsfig}
\usepackage{graphics}
\usepackage{array}
\usepackage{enumerate}

\theoremstyle{theorem}
\newtheorem{theorem}{Theorem}[section]

\theoremstyle{corollary}

\theoremstyle{lemma}
\newtheorem{lemma}{Lemma}[section]
\theoremstyle{definition}

\theoremstyle{proof}

\theoremstyle{remark}

\newcommand{\bel}[1]{\begin{equation}\label{#1}}

\newcommand{\be}{\begin{equation}}

\newcommand{\ba}{\begin{eqnarray}}
\newcommand{\ea}{\end{eqnarray}}
\newcommand{\rf}[1]{(\ref{#1})}
\newcommand{\bi}{\bibitem}
\newcommand{\qe}{\end{equation}}

\begin{document}
\title{On the  spectrum of the normalized graph Laplacian}
\author{Anirban Banerjee, J\"urgen Jost\footnote{Max Planck Institute for Mathematics in the
  Sciences, Inselstr.22, 04103 Leipzig, Germany, banerjee@mis.mpg.de, jost@mis.mpg.de}}

\maketitle

\begin{abstract}
The spectrum of the normalized graph Laplacian yields a very
comprehensive set of invariants of a graph. In order to understand the
information contained in those invariants better, 
  we systematically investigate the behavior of this spectrum under local and global operations like motif doubling,
  graph joining or splitting. The eigenvalue $1$ plays a particular
  role, and we therefore emphasize those constructions that change its
  multiplicity in a controlled manner, like the iterated duplication
  of nodes. 
\end{abstract}

Let $\Gamma$ be a finite and connected graph with $N$ vertices. Two vertices $i,j
\in\Gamma$ are called neighbors, $i\sim j$, when they are connected by
an edge of $\Gamma$. For a vertex $i \in
\Gamma$, let $n_i$ be its degree, that is, the number of its
neighbors. For functions $v$ from the vertices of $\Gamma$ to
$\mathbb{R}$, we define the (normalized) Laplacian as
\bel{3}
\Delta v(i):=  v(i) -\frac{1}{n_i} \sum_{j, j \sim i}v(j) .
\end{equation}
This is different from the operator  $Lv(i):=n_i v(i)-\sum_{j, j \sim i}v(j)$
  usually   studied in the graph theoretical literature as the
  (algebraic) graph Laplacian, see e.g. \cite{Bol,GoRo,Merris,Mohar,BiLeSt}, but
  equivalent to the Laplacian investigated in \cite{Chung}. This
  normalized Laplacian is, for example, the operator underlying random
  walks on graphs, and in contrast to the algebraic Laplacian, it
  naturally incorporates a conservation law. \\
We are interested in the spectrum of this operator as yielding
important invariants of the underlying graph $\Gamma$ and
incorporating its qualitative properties. As in the case of the algebraic
Laplacian, one can essentially recover the graph from its spectrum, up
to isospectral graphs. The latter are known to exist, but are
relatively rare and qualitatively quite similar in most respects (see e.g. 
\cite{ZW} for a systematic discussion). For
a heuristic algorithm for the algebraic Laplacian which can be easily
modified for the normalized Laplacian, see \cite{IpMi}.\\
We now recall some elementary properties, see e.g. \cite{Chung,JJ1}. The normalized Laplacian, henceforth simply called the Laplacian, is
symmetric for the product
\bel{2}
(u,v):=\sum_{i \in V} n_i u(i)v(i)
\end{equation}
for real valued functions $u,v$ on the vertices of $\Gamma$. $\Delta$ is nonnegative in the sense that
$(\Delta u,u) \ge 0$ for all $u$. \\
From these properties, we conclude that the eigenvalues of $\Delta$
are real and nonnegative, where 
  the eigenvalue equation is
\bel{6}
\Delta u - \lambda u =0.
\end{equation}
A nonzero solution $u$ is called an eigenfunction for the eigenvalue
$\lambda$.\\
The smallest eigenvalue is $\lambda_0 =0$, with a constant eigenfunction. Since we
  assume that $\Gamma$ is connected, this eigenvalue is simple, that
  is
\bel{7}
\lambda_k >0
\end{equation}
for $k>0$ where we order the eigenvalues as
$$ \lambda_0=0 < \lambda_1 \le ... \le \lambda_{N-1}.$$
\bel{8}
\lambda_{N-1} \le 2,
\qe
with equality iff the graph is bipartite. The latter is also
equivalent to the fact that whenever $\lambda$ is an eigenvalue, then so is
$2-\lambda$. \\
For a complete graph of $N$ vertices, we have
\bel{9}
\lambda_1 = ... = \lambda_{N-1}=\frac{N}{N-1},
\qe
that is, the eigenvalue $\frac{N}{N-1}$ occurs with multiplicity
$N-1$. Among all graphs with $N$ vertices, this is the largest
possible value for $\lambda_1$ and the smallest possible value for $\lambda_{N-1}$.\\\\
The eigenvalue equation \rf{6} is
\bel{10}
\frac{1}{n_i}\sum_{j \sim i} u(j)=(1-\lambda)u(i) \text{ for all }i.
\qe
In particular, when the eigenfunction $u$ vanishes at $i$, then also
$\sum_{j \sim i} u(j)=0$, and conversely (except for $\lambda=1$). This observation will be
useful for us below.

\section{The eigenvalue 1}
For the eigenvalue $\lambda=1$, \rf{10} becomes simply
\bel{11}
\sum_{j \sim i} u(j) =0 \text{ for all }i,
\qe
that is, the average of the neighboring values vanishes for each
$i$. We call a solution $u$ of \rf{11} balanced. The multiplicity
$m_1$ of
the eigenvalue 1 then equals the number of linearly independent
balanced functions on $\Gamma$.\\
There is an equivalent algebraic formulation: Let $A=(a_{ij})$ be the
adjacency matrix of $\Gamma$; $a_{ij}=1$ if $i$ and $j$ are connected
by an edge and =0 else. \rf{11} then simply means
\bel{12}
Au=\sum_j a_{ij}u(j) =0,
\qe
that is, the vector $u(j)_{j\in \Gamma}$ is in the kernel of the
adjacency matrix. Thus,
\bel{13}
m_1=\text{dim} \ker A.
\qe
We are interested in the question of estimating the multiplicity of
the eigenvalue 1 on a graph. An obvious method for this is to determine
restrictions on corresponding eigenfunctions $f_1$. We shall do that
by graph theoretical considerations, and in this sense, this
constitutes a geometric approach to the algebraic question of
determining or estimating the kernel of a symmetric 0-1 matrix with
vanishing diagonal. \cite{Bev} systematically investigated the effect of the addition of a single vertex on $m_1$. Here, we are also interested in the effect of more global graph operations.\\
We start with the
following simple observation
\begin{lemma}
Let $q$ be a  vertex of degree 1 in $\Gamma$ (such a $q$ is called a
pending vertex). Then any eigenfunction $f_1$ for the eigenvalue 1
vanishes at the unique neighbor of $q$.
\end{lemma}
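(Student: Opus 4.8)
The plan is to apply the balanced condition \rf{11} directly at the pending vertex $q$ itself. Recall that an eigenfunction $f_1$ for $\lambda=1$ is precisely a balanced function, so by \rf{11} it satisfies $\sum_{j\sim i} f_1(j)=0$ at \emph{every} vertex $i$. The key idea is that the pending vertex, having only one neighbor, contributes a single term to its own neighbor-sum, and the vanishing of that sum then pins down the value of $f_1$ at that neighbor.

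Concretely, let $p$ denote the unique neighbor of $q$; this exists and is unique because $n_q=1$. Evaluating \rf{11} at $i=q$ gives $\sum_{j\sim q} f_1(j)=0$. But the only index $j$ with $j\sim q$ is $j=p$, so the left-hand side collapses to the single term $f_1(p)$. Hence $f_1(p)=0$, which is exactly the assertion of the lemma.

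There is essentially no obstacle here: the argument is a one-line specialization of the balanced condition to the vertex of smallest possible degree. The only point deserving a moment's care is the passage from the general eigenvalue equation \rf{10} to the balanced form \rf{11}, but this has already been established for $\lambda=1$ in the discussion preceding the lemma, so it may be invoked freely. One could equally phrase the proof algebraically through \rf{12}: the row of the adjacency matrix $A$ indexed by $q$ has a single nonzero entry, located in the column indexed by $p$, so the condition $Af_1=0$ immediately forces $f_1(p)=0$.
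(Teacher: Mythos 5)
Your proof is correct and is precisely the argument the paper intends: it states the lemma as a ``simple observation'' with no written proof, since evaluating the balanced condition \rf{11} at the pending vertex $q$ immediately forces $f_1(p)=0$ at its unique neighbor $p$. Nothing further is needed.
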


\section{Motif doubling, graph splitting and joining}
Let $\Sigma$ be a connected subgraph of $\Gamma$ with vertices $p_1,\dots ,p_m$,
containing all of $\Gamma$'s edges between those vertices. We call
such a $\Sigma$ a motif. The situation we have in mind is where $N$,
the number of vertices of $\Gamma$, is large while $m$, the number of
vertices of $\Sigma$, is small. Let 1 be an eigenvalue of
$\Sigma$ with eigenfunction $f^\Sigma_1$. $f^\Sigma_1$ when extended
by 0 outside $\Sigma$ to all of $\Gamma$ need not be an eigenfunction
of $\Gamma$, and 1 need not even be an eigenvalue of $\Gamma$. 
We can, however, enlarge
$\Gamma$ by doubling the motif $\Sigma$ so that the enlarged graph
also possesses the eigenvalue 1, with a localized eigenfunction:
\begin{theorem}
\label{th1}
Let $\Gamma^\Sigma$ be obtained from $\Gamma$ by adding a copy of the
motif $\Sigma$ consisting of the vertices
$q_1,\dots ,q_m$ and the corresponding connections between them, and connecting each $q_\alpha$ with all $p \notin
\Sigma$ that are neighbors of $p_\alpha$. Then $\Gamma^\Sigma$
possesses the eigenvalue 1, with a localized eigenfunction that is
nonzero only at the $p_\alpha$ and the $q_\alpha$.
\end{theorem}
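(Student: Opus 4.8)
The plan is to write down an explicit balanced function on $\Gamma^\Sigma$ by reflecting the motif eigenfunction antisymmetrically across the two copies of $\Sigma$. Concretely, let $f^\Sigma_1$ be the given eigenfunction for the eigenvalue $1$ on $\Sigma$, so that by \rf{11} applied within $\Sigma$ we have $\sum_{\beta:\, p_\alpha \sim p_\beta} f^\Sigma_1(p_\beta)=0$ for every $\alpha$; here one uses that $\Sigma$ contains all of $\Gamma$'s edges among $p_1,\dots,p_m$, so adjacency inside $\Sigma$ agrees with adjacency in $\Gamma$. I would then define a function $u$ on $\Gamma^\Sigma$ by setting $u(p_\alpha)=f^\Sigma_1(p_\alpha)$ and $u(q_\alpha)=-f^\Sigma_1(p_\alpha)$ for each $\alpha$, and $u\equiv 0$ at every other vertex. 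This $u$ is supported exactly on the doubled motif, so it is the desired localized candidate, and it is nonzero because $f^\Sigma_1\not\equiv 0$.

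It then remains to verify the balancing condition \rf{11}, namely $\sum_{j\sim i} u(j)=0$, at each vertex $i$ of $\Gamma^\Sigma$, treating the three vertex types separately. At an original motif vertex $p_\alpha$, the construction adds no new incident edges, so its neighbors are exactly those in $\Gamma$: its external neighbors, where $u=0$, together with the $p_\beta$ satisfying $p_\alpha\sim p_\beta$. Hence the sum reduces to $\sum_{\beta:\, p_\alpha\sim p_\beta} f^\Sigma_1(p_\beta)$, which vanishes by the eigenfunction equation on $\Sigma$. At a copied vertex $q_\alpha$, the neighbors are the external neighbors of $p_\alpha$, where $u=0$, together with the $q_\beta$ for which $q_\alpha\sim q_\beta$, that is, for which $p_\alpha\sim p_\beta$; the sum is therefore $-\sum_{\beta:\, p_\alpha\sim p_\beta} f^\Sigma_1(p_\beta)$, again zero.

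The one place that requires the antisymmetric choice of sign is the external vertices, and this is the step I would treat most carefully. For $p\notin\Sigma$, its neighbors in $\Gamma^\Sigma$ consist of its old neighbors in $\Gamma$ together with every $q_\alpha$ such that $p$ was a neighbor of $p_\alpha$. The crucial bookkeeping point is that, by construction, $p$ is adjacent to $q_\alpha$ precisely when $p\sim p_\alpha$, so the motif neighbors of $p$ come in matched pairs $(p_\alpha,q_\alpha)$. Since $u(p_\alpha)+u(q_\alpha)=f^\Sigma_1(p_\alpha)-f^\Sigma_1(p_\alpha)=0$ and all remaining neighbors carry $u=0$, the neighbor sum at $p$ cancels pair by pair. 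With \rf{11} verified at all three vertex types, $u$ is a balanced function and hence an eigenfunction of $\Gamma^\Sigma$ for the eigenvalue $1$, localized on the $p_\alpha$ and $q_\alpha$ as claimed. I expect no genuine analytic difficulty here; the only real subtlety is the neighbor bookkeeping at external vertices, where one must use that duplicating the motif sends each external edge $p\sim p_\alpha$ to a parallel edge $p\sim q_\alpha$, producing exactly the cancelling pairs.
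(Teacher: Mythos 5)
Your construction is exactly the eigenfunction the paper writes down in its proof (equation \rf{20}): $f^\Sigma_1$ on the original motif, $-f^\Sigma_1$ on the copy, and $0$ elsewhere. The paper states this formula without further comment, while you additionally carry out the verification of the balancing condition \rf{11} at the three vertex types, which is correct and is precisely the check the paper leaves to the reader; in particular you rightly identify the pairwise cancellation $u(p_\alpha)+u(q_\alpha)=0$ at external vertices as the point where the antisymmetric sign choice is used.
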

\begin{proof}
A corresponding eigenfunction is obtained as
\bel{20}
f^{\Gamma^\Sigma}_1(p)=\begin{cases} f^\Sigma_1(p_\alpha)
  \text{ if } p=p_\alpha \in \Sigma \\
-f^\Sigma_1(p_\alpha)
  \text{ if } p=q_\alpha\\
0 \text{ else}.
\end{cases}
\qe
\end{proof} 
The theorem also holds for the case where $\Sigma$ is a single vertex 
$p_1$ (even though such a motif does not possess the eigenvalue 1
itself).  Thus, we can always produce the eigenvalue by {\bf vertex 
  doubling}. This is a reformulation of a result of \cite{Ell}.
 \\
Thus, if we wish to produce a high multiplicity for the eigenvalue 1,
we can perform many vertex doublings. We could either duplicate
different vertices, or we could duplicate one vertex repeatedly. In
fact, the repeated doubling of one vertex leaves a characteristic
trace in the number of certain small motifs in the graph. Let $p_1$ be
a vertex and $q_1$ its double. We consider any motif $\Sigma$ consisting of a
certain collection $p, p', p'',\dots $ of neighbors of $p_1$ together
with their connections to both $p_1$ and $q_1$ and possibly some
connections among them. 
\begin{theorem}
Let the graph $\bar{\Gamma}$ be obtained from $\Gamma$ by $n$
successive doublings of the vertex $p_1$, and let $\Sigma$ be any
motif of the type just described. Then $\bar{\Gamma}$ contains at
least
$n \choose 2$ instances of the motif $\Sigma$.
\end{theorem}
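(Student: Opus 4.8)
The plan is to unpack the doubling construction and reduce the count to the combinatorics of choosing pairs among the copies of $p_1$. First I would record what a single vertex doubling does: applying Theorem~\ref{th1} with $\Sigma$ the single vertex $p_1$ adds a vertex joined to precisely the neighbors of $p_1$ and \emph{not} joined to $p_1$ itself, since $p_1$ is not among its own neighbors. Thus each double is a genuine twin of $p_1$: writing $N(p_1)$ for the set of neighbors of $p_1$ in $\Gamma$, the double has neighbor set exactly $N(p_1)$ and is non-adjacent to $p_1$.

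Next I would establish, by induction on the number of doublings, the structural claim that after $n$ successive doublings of $p_1$ the $n$ newly created vertices $q_1,\dots,q_n$ are pairwise non-adjacent and each has neighbor set exactly $N(p_1)$. The inductive step only has to check that the $(k{+}1)$-st double $q_{k+1}$, which by construction is joined exactly to the current neighbors of $p_1$, acquires no edge to any earlier copy $q_1,\dots,q_k$; this holds because the earlier copies were never neighbors of $p_1$ (they are twins, hence non-adjacent to $p_1$), so they are not among the vertices to which $q_{k+1}$ is attached. Consequently $\{p_1,q_1,\dots,q_n\}$ is a set of $n+1$ mutually non-adjacent vertices, all sharing the neighbor set $N(p_1)$.

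Finally I would produce the instances. Fix the motif $\Sigma$ on the chosen neighbors $p,p',p'',\dots \in N(p_1)$ together with their edges to $p_1$, to $q_1$, and among themselves. For any pair $\{q_i,q_j\}$ with $1\le i<j\le n$, the subgraph induced on $\{q_i,q_j,p,p',\dots\}$ has $q_i$ and $q_j$ each joined to all of $p,p',\dots$, has $q_i\not\sim q_j$, and retains exactly the original edges among $p,p',\dots$ (doubling $p_1$ never alters edges inside $N(p_1)$); hence it is isomorphic to $\Sigma$, with $q_i,q_j$ in the roles of $p_1,q_1$. Distinct pairs give distinct induced subgraphs, so the $\binom{n}{2}$ pairs drawn from $q_1,\dots,q_n$ already yield $\binom{n}{2}$ instances of $\Sigma$; the additional pairs that involve the original $p_1$ account for the phrase ``at least.''

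The only genuinely delicate point is the inductive bookkeeping in the second paragraph: one must verify that repeated doubling of the \emph{same} vertex $p_1$ keeps all copies pairwise non-adjacent and with an unchanged common neighbor set, rather than inadvertently linking successive copies or enlarging the relevant neighborhoods. Once that invariant is in hand, the remaining work is a routine check that the induced subgraph on a pair of copies together with the fixed neighbors matches $\Sigma$ edge for edge, and the final count is immediate.
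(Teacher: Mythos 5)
Your proposal is correct and follows essentially the same route as the paper: the paper's (much terser) proof likewise takes any two copies of $p_1$ together with the fixed neighbors $p, p', p'', \dots$ and counts the $\binom{n}{2}$ pairs of copies. Your inductive verification that the copies remain pairwise non-adjacent twins with common neighbor set $N(p_1)$ makes explicit an invariant the paper leaves implicit, but it is the same argument.
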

\begin{proof}
An instance of the motif $\Sigma$ is obtained by taking any two copies
of $p_1$ and the vertices $p, p', p'',\dots $ together with the
connections defining $\Sigma$. There exist $n \choose 2$ such pairs of
copies of $p_1$ in  $\bar{\Gamma}$.
\end{proof}

Theorem \ref{th1}, however, does not apply to eigenvalues other than 1
because for $\lambda\ne 1$, the vertex degrees $n_i$ in \rf{10} are important,
and this is affected by embedding the motif $\Sigma$ into another
graph $\Gamma$. However, we have the following variant in the general
case. 
\begin{theorem}
Let  $\Sigma$ be a motif in $\Gamma$. Suppose $f$ satisfies
\bel{21}
\frac{1}{n_i}\sum_{j\in \Sigma, j \sim i} f(j)=(1-\lambda)f(i) \text{
  for all }i\in \Sigma \text{ and some } \lambda.
\qe
Then the motif doubling of Theorem \ref{th1} produces a graph
$\Gamma^\Sigma$ with eigenvalue $\lambda$ and an eigenfunction
$f^{\Gamma^\Sigma}$ agreeing with $f$
on $\Sigma$, with $-f$ on the double of $\Sigma$, and being 0 on the
rest of $\Gamma^\Sigma$.
\end{theorem}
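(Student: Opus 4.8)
The plan is to verify directly that the function $f^{\Gamma^\Sigma}$, defined exactly as in \rf{20} but with $f$ in place of $f^\Sigma_1$, satisfies the eigenvalue equation \rf{10} on $\Gamma^\Sigma$ for the given $\lambda$. The essential preliminary step is a degree count in the doubled graph. Since the doubling of Theorem \ref{th1} attaches each new vertex $q_\alpha$ only to the other copies $q_\beta$ (mirroring the internal edges of $\Sigma$) and to those neighbors of $p_\alpha$ lying outside $\Sigma$, no new edge is incident to a vertex of the original $\Sigma$; hence each $p_\alpha$ retains its degree $n_{p_\alpha}$ from $\Gamma$. Moreover $q_\alpha$ acquires exactly one neighbor for each neighbor of $p_\alpha$ in $\Gamma$ --- a copy $q_\beta$ for every neighbor inside $\Sigma$ and the shared vertex $p$ for every neighbor outside --- so $q_\alpha$ also has degree $n_{p_\alpha}$. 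This is the point on which everything hinges: the coefficient $n_i$ in hypothesis \rf{21} is precisely the degree that both $p_\alpha$ and $q_\alpha$ carry in $\Gamma^\Sigma$, which makes the argument succeed for $\lambda \ne 1$ even though, as just noted, degrees are in general disturbed by embedding a motif.

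With the degrees in hand, I would check \rf{10} at the three types of vertices. At a vertex $p_\alpha \in \Sigma$ the only neighbors on which $f^{\Gamma^\Sigma}$ does not vanish are the $p_\beta \in \Sigma$ adjacent to it, so the neighbor-sum collapses to $\sum_{j \in \Sigma,\, j \sim p_\alpha} f(j)$; dividing by $n_{p_\alpha}$ and invoking \rf{21} yields exactly $(1-\lambda)f(p_\alpha)$. At a copy $q_\alpha$ the nonzero neighbors are the $q_\beta$, which carry the value $-f(p_\beta)$ and are adjacent to $q_\alpha$ exactly when $p_\beta \sim p_\alpha$; the neighbor-sum is therefore $-\sum_{j \in \Sigma,\, j \sim p_\alpha} f(j)$, and \rf{21} now delivers $(1-\lambda)(-f(p_\alpha))$, matching the prescribed value $-f(p_\alpha)$ at $q_\alpha$.

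The remaining and most delicate case is a vertex $p \notin \Sigma$, where $f^{\Gamma^\Sigma}$ is required to vanish. Here the verification rests on a cancellation rather than on \rf{21}: by construction $p$ is adjacent to $q_\alpha$ precisely when it was adjacent to $p_\alpha$, so the two index sets coincide, and the contributions $f(p_\alpha)$ from the original vertices and $-f(p_\alpha)$ from their copies annihilate in pairs, giving neighbor-sum $0$ as needed. I expect this pairing --- together with the degree computation for the $q_\alpha$ --- to be the only genuinely substantive point; the two cases inside the doubled motif are then immediate consequences of \rf{21} with the sign flip already built into \rf{20}.
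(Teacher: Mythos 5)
Your proposal is correct and takes essentially the same route as the paper's own proof: a direct verification of the eigenvalue equation \rf{10} at the three kinds of vertices of $\Gamma^\Sigma$ (the $p_\alpha$, the $q_\alpha$, and the untouched rest), with the pairwise cancellation $f(p_\alpha)-f(p_\alpha)=0$ handling the vertices outside the motif. The only difference is one of detail: you make explicit the degree count (that both $p_\alpha$ and its copy $q_\alpha$ carry the degree $n_{p_\alpha}$ from $\Gamma$, so the normalization in \rf{21} is the right one), which the paper's two-sentence proof leaves implicit.
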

\begin{proof}
\rf{21} implies that $f$ satisfies the eigenvalue equation on
$\Sigma$, and therefore $-f$ satisfies it on its double. As before,
the doubling has the effect that for all other vertices $j\in \Gamma^\Sigma$, 
\bel{22}
\frac{1}{n_j}\sum_{\ell \sim j}f^{\Gamma^\Sigma}(\ell)=0.
\qe
\end{proof}
The simplest motif is an edge connecting two vertices $p_1,p_2$. The
corresponding relations \rf{21} then are
\bel{23}
\frac{1}{n_{p_1}}f(p_2)=(1-\lambda)f(p_1),\quad
\frac{1}{n_{p_2}}f(p_1)=(1-\lambda)f(p_2)
\qe
which admit the solutions 
\bel{24}
\lambda=1\pm \frac{1}{\sqrt{n_{p_1}n_{p_2}}}.
\qe
Thus edge doubling leads to those eigenvalues which when $p_1$ or
$p_2$ has a large degree become close to 1. In any case, the two
values are symmetric about 1.\\\\
We can also double the entire graph:
\begin{theorem}
Let $\Gamma_1$ and $\Gamma_2$ be isomorphic graphs with vertices $p_1,
\dots ,p_n$ and $q_1, \dots ,q_n$ respectively, where $p_i$
corresponds to $q_i$, for $i = 1, \dots , n$. We then construct a
graph $\Gamma_0$ by connecting $p_i$ with $q_j$ whenever $p_j \sim p_i$.
If $\lambda_1, \dots , \lambda_n$ are the eigenvalues of  $\Gamma_1$
and $\Gamma_2$, then $\Gamma_0$ has these same eigenvalues, 
 and the eigenvalue 1 with
multiplicity $n$.
\end{theorem}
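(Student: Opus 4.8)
The plan is to exploit the obvious involutive automorphism $\sigma$ of $\Gamma_0$ that interchanges $p_i$ and $q_i$ for every $i$. First I would record the combinatorial structure of $\Gamma_0$: the neighbors of $p_i$ are precisely the vertices $p_j$ and $q_j$ with $p_j\sim p_i$ in $\Gamma_1$, so that the degree of $p_i$ in $\Gamma_0$ is $2n_{p_i}$, twice its degree in $\Gamma_1$ (and likewise for $q_i$). One checks immediately that $\sigma$ preserves adjacency and degrees, hence commutes with $\Delta$ on $\Gamma_0$. Since $\sigma^2=\mathrm{id}$, the space of functions on the vertices of $\Gamma_0$ splits as the direct sum of the $\sigma$-symmetric functions (those with $u(q_i)=u(p_i)$) and the $\sigma$-antisymmetric functions (those with $u(q_i)=-u(p_i)$), and $\Delta$ preserves each summand. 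It then suffices to diagonalize $\Delta$ on each piece separately.

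On the symmetric part, I would take an eigenfunction $f$ of $\Gamma_1$ for an eigenvalue $\lambda$ and extend it by $u(p_i)=u(q_i)=f(p_i)$. Evaluating the eigenvalue equation \rf{10} at $p_i$ in $\Gamma_0$, the neighbor sum runs over both the $p_j$ and the $q_j$ with $p_j\sim p_i$, and since $u$ takes the same value $f(p_j)$ on each such pair, this sum equals $2\sum_{p_j\sim p_i}f(p_j)$; dividing by the degree $2n_{p_i}$ the factors of $2$ cancel and \rf{10} for $\Gamma_0$ reduces exactly to \rf{10} for $\Gamma_1$. Hence every eigenvalue $\lambda_k$ of $\Gamma_1$ reappears on the symmetric part, and since this part has dimension $n$ it contributes precisely $\lambda_1,\dots,\lambda_n$.

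On the antisymmetric part, I would put $u(p_i)=g(i)$, $u(q_i)=-g(i)$ for an arbitrary function $g$. Now the contributions of $p_j$ and $q_j$ in the neighbor sum at $p_i$ cancel in pairs, so $\sum_{j\sim p_i}u(j)=0$ for every $i$; by \rf{11} this is exactly the balancedness condition, so every antisymmetric function is an eigenfunction for the eigenvalue $1$. As this subspace is $n$-dimensional, the eigenvalue $1$ occurs with multiplicity $n$ here.

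The only point requiring care --- and the place I would be most careful --- is completeness: I must be sure these two families exhaust the spectrum rather than merely producing eigenvalues. This follows from dimension bookkeeping, since the symmetric and antisymmetric subspaces are complementary and together span the full $2n$-dimensional function space while each is $\Delta$-invariant; thus the $2n$ eigenvalues of $\Gamma_0$ are exactly $\lambda_1,\dots,\lambda_n$ together with $1$ repeated $n$ times. (If $1$ already occurs among the $\lambda_k$, its total multiplicity in $\Gamma_0$ is correspondingly larger; the statement counts only the $n$ copies coming from the antisymmetric part.) This is the same symmetric/antisymmetric mechanism, driven by the $\pm f$ pattern, that underlies the earlier doubling theorems.
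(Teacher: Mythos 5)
Your proposal is correct and follows essentially the same route as the paper: the symmetric extension $u(p_i)=u(q_i)=f_\lambda(p_i)$ reproduces the eigenvalues of $\Gamma_1$ via the cancellation of the doubled degree against the doubled neighbor sum, and the antisymmetric functions give the eigenvalue $1$, the paper's eigenbasis being exactly the delta-type antisymmetric functions with value $1$ at $p_i$, $-1$ at $q_i$, and $0$ elsewhere. Your explicit completeness argument via the $\sigma$-invariant symmetric/antisymmetric splitting is a mild sharpening that the paper leaves implicit, but the underlying mechanism is identical.
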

\begin{proof}
Since the degree of every vertex $p$ in $\Gamma_0$ is $2n_p$ where $n_p$
is its original degree in $\Gamma_1$, we have for an
eigenfunction $f_\lambda$ of $\Gamma_1$ (which then is also an
eigenfunction on $\Gamma_2$),
\bel{24a}
\frac{1}{2n_p}\sum_{s\in \Gamma_0,s \sim p}
f_\lambda(s)=\frac{1}{n_p}\sum_{s\in \Gamma_1,s \sim p} f_\lambda(s) =(1-\lambda)f_\lambda(p).
\qe
Thus, by \rf{10}, it is an eigenfunction on $\Gamma_0$.\\
Finally, similarly to the proof of Theorem \ref{th1}, we obtain the eigenvalue 1 with
multiplicity $n$: for each $p \in \Gamma_1$, we construct an
eigenfunction with value 1 at $p$, $-1$ at its double in $\Gamma_2$,
and 0 elsewhere.
\end{proof}

We now turn to a different operation. 
Let  $\Gamma$ be a graph with an eigenfunction $f_1$. We
arbitrarily divide $\Gamma$ into subgraphs
$\Sigma_0,\Sigma_1,\Sigma_2$ such that there is no edge between an
element of $\Sigma_1$ and an element of $\Sigma_2$. We then take the
graphs $\Gamma_1=\Sigma_1 \cup \Sigma_0$ and  $\Gamma_2=\Sigma_2 \cup
\Sigma_0$, in such a manner that each edge between two elements of
$\Sigma_0$ is contained in either $\Gamma_1$ or $\Gamma_2$, but not in
both of them, and form a connected graph $\Gamma_0$ by taking an additional vertices
$w$ for each vertex $q\in \Sigma_0$ and connect it with  the two copies of $q$
in  $\Gamma_1$ and $\Gamma_2$.
\begin{theorem}
$\Gamma_0$ possesses the eigenvalue 1 with an eigenfunction that
agrees with $f_1$ on $\Gamma_1$.
\end{theorem}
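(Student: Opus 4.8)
The plan is to produce directly a balanced function on $\Gamma_0$, i.e.\ a solution of \rf{11}, rather than to manipulate the full eigenvalue equation. The decisive simplification is that the eigenvalue equation for $\lambda=1$ is the pure balance condition $\sum_{j\sim i}u(j)=0$ and does \emph{not} involve the vertex degrees; hence I need not track how degrees change when the interface $\Sigma_0$ is duplicated, and it suffices to check one linear condition at every vertex. Write $q^{(1)},q^{(2)}$ for the two copies of a vertex $q\in\Sigma_0$ (lying in $\Gamma_1$, resp.\ $\Gamma_2$), and $w_q$ for the mediating vertex joined to both. I define $g$ on $\Gamma_0$ by $g=f_1$ on all of $\Gamma_1$ (so $g(q^{(1)})=f_1(q)$), by $g=-f_1$ on all of $\Gamma_2$ (so $g(q^{(2)})=-f_1(q)$), and I leave the values $g(w_q)$ to be fixed below. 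By construction $g$ agrees with $f_1$ on $\Gamma_1$, as required.

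Next I dispose of the easy vertices. If $i\in\Sigma_1$, then all of its $\Gamma_0$-neighbors are copy-$1$ vertices carrying the values of $f_1$, and its neighbor set is exactly the $\Gamma$-neighbor set of $i$ (no $\Sigma_1$--$\Sigma_2$ edges exist); hence $\sum_{j\sim i}g(j)=\sum_{j\sim i}f_1(j)=0$ by the balance of $f_1$. The same argument with an overall sign gives balance at every interior $s\in\Sigma_2$. At a mediating vertex $w_q$ the only neighbors are $q^{(1)}$ and $q^{(2)}$, so $\sum_{j\sim w_q}g(j)=f_1(q)+(-f_1(q))=0$ automatically, using nothing but the antisymmetry of $g$ across the two copies.

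The real content sits at the interface copies $q^{(1)}$ and $q^{(2)}$, whose neighborhoods are split between the two sides. The neighbors of $q^{(1)}$ are its $\Sigma_1$-neighbors together with those $\Sigma_0$-neighbors joined to it by an edge assigned to $\Gamma_1$ (all carrying $f_1$-values), plus $w_q$; thus balance at $q^{(1)}$ reads $\big(\sum_{j\in\Gamma_1,\,j\sim q}f_1(j)\big)+g(w_q)=0$. Symmetrically, balance at $q^{(2)}$ reads $-\big(\sum_{j\in\Gamma_2,\,j\sim q}f_1(j)\big)+g(w_q)=0$. I therefore set $g(w_q):=\sum_{j\in\Gamma_2,\,j\sim q}f_1(j)$, the sum of $f_1$ over the $\Gamma_2$-side neighbors of $q$. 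This instantly satisfies the equation at $q^{(2)}$, and it satisfies the equation at $q^{(1)}$ precisely because the two required values of $g(w_q)$ coincide: their agreement is the identity $\sum_{j\in\Gamma_1,\,j\sim q}f_1(j)+\sum_{j\in\Gamma_2,\,j\sim q}f_1(j)=\sum_{j\sim q\text{ in }\Gamma}f_1(j)=0$, which is nothing but the balance of the original eigenfunction $f_1$ at $q\in\Sigma_0$. This single use of the hypothesis is the crux of the argument, and it is exactly here that the splitting convention — each $\Sigma_0$-internal edge belongs to $\Gamma_1$ or $\Gamma_2$ but not both — is needed, so that every neighbor of $q$ is counted once and on a definite side.

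Finally, $g$ is not identically zero: since $f_1\not\equiv 0$ on $\Gamma$ and every vertex of $\Gamma$ lies in $\Sigma_0\cup\Sigma_1\cup\Sigma_2$, the function $g$ is nonzero on the corresponding copy. Hence $g$ is a genuine eigenfunction of $\Gamma_0$ for the eigenvalue $1$ that agrees with $f_1$ on $\Gamma_1$. I expect the only points demanding care to be bookkeeping ones: correctly partitioning the neighbors of an interface vertex $q$ into its $\Sigma_1$-, $\Sigma_2$-, and split $\Sigma_0$-contributions, and verifying that the value forced on $w_q$ from the two sides is consistent — which, as noted, is guaranteed by the balance of $f_1$ together with the edge-splitting convention.
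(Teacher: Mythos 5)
Your proposal is correct and is essentially the paper's own proof: the same antisymmetric extension ($f_1$ on $\Gamma_1$, $-f_1$ on $\Gamma_2$), the same assignment at the mediating vertices, and the same key identity $\sum_{s\in\Gamma_1,\,s\sim q}f_1(s)+\sum_{s\in\Gamma_2,\,s\sim q}f_1(s)=0$ coming from the balance of $f_1$ on $\Gamma$. The only cosmetic difference is that you write $g(w_q)=\sum_{j\in\Gamma_2,\,j\sim q}f_1(j)$ where the paper writes $-\sum_{s\in\Gamma_1,\,s\sim q}f_1(s)$, which are equal by that very identity; your vertex-by-vertex verification just makes explicit what the paper leaves implicit.
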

\begin{proof}
We put
\bel{25}
f^{\Gamma_0}_1(p)=\begin{cases} f_1(p) \text{ for } p \in \Gamma_1
  \\
-f_1(p) \text{ for } p \in  \Gamma_2 \\
-\sum_{s \in \Gamma_1, s\sim q}f_1(s) \text{ when }p=w  \text{ is one of
  the added vertices connected to } q\in \Gamma_1
\end{cases}
\qe
This works out  because $\sum_{s \in \Gamma_1, s\sim q}f_1(s)+\sum_{s
  \in \Gamma_2, s\sim q}f_1(s)=\sum_{s \in \Gamma, s \sim q}f_1(s)=0$
since $f_1$ is an eigenfunction on $\Gamma$.
\end{proof}

A simple and special case consists in taking a node $p$ and joining a
chain of length 2 to it, that is, connect $p$ with a new node $p_1$
and that node in turn with another new node $p_2$ and put the value 0 at
$p_1$ and the value $-f_1(p)$ at $p_2$. This case was obtained in \cite{Bev}.\\\\ 
The next operation, graph joining, works for any eigenvalue, not just
1:
\begin{theorem}
\label{th5} 
Let $\Gamma_1, \Gamma_2$ be graphs with the same eigenvalue
$\lambda$ and corresponding eigenfunctions $f^1_\lambda,
f^2_\lambda$. Assume that $f^1_\lambda(p_1)=0$ and
$f^2_\lambda(p_2)=0$ for some $p_1 \in \Gamma_1,p_2 \in
\Gamma_2$. Then the graph $\Gamma$ obtained by joining $\Gamma_1$ and
$\Gamma_2$ via identifying $p_1$ with $p_2$ also has the eigenvalue
$\lambda$ with an eigenfunction given by $f^1_\lambda$ on $\Gamma_1$,
$f^2_\lambda$ on $\Gamma_2$.
\end{theorem}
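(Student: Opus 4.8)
The plan is to write down the obvious candidate eigenfunction and verify the eigenvalue equation \rf{10} vertex by vertex, the only delicate point being the junction vertex. Define $f$ on $\Gamma$ by setting $f=f^1_\lambda$ on $\Gamma_1$ and $f=f^2_\lambda$ on $\Gamma_2$. First I would check that this is well defined: the only vertex shared by the two copies after the identification is the junction $p$ (the image of $p_1$ and $p_2$), and there both prescriptions agree since $f^1_\lambda(p_1)=0=f^2_\lambda(p_2)$. Since $f^1_\lambda$ is a genuine (nonzero) eigenfunction on $\Gamma_1$, the function $f$ is not identically zero, so it is a legitimate candidate once the equation is verified.

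Next I would treat the vertices away from the junction. For $i\in\Gamma_1$ with $i\ne p$, neither the degree $n_i$ nor the set of neighbors of $i$ is altered by the identification, because the only structural change occurs at $p$ and merely relabels $p_1$ as $p$ without adding or removing any edge incident to $i$. Hence the left-hand side of \rf{10} at $i$ is exactly the sum computed in $\Gamma_1$, and the equation $\frac{1}{n_i}\sum_{j\sim i}f(j)=(1-\lambda)f(i)$ is inherited verbatim from the fact that $f^1_\lambda$ is an eigenfunction on $\Gamma_1$. The same argument applies symmetrically to every $i\in\Gamma_2$ with $i\ne p$.

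The main (and essentially only) obstacle is the junction vertex $p$, where the degree genuinely changes: its neighbors are precisely the neighbors of $p_1$ in $\Gamma_1$ together with those of $p_2$ in $\Gamma_2$, and since the two graphs are otherwise disjoint these are distinct, so the degree of $p$ equals $n_{p_1}+n_{p_2}$. Here I would exploit $f(p)=0$, which makes the right-hand side of \rf{10} vanish, and then show the left-hand side vanishes as well. The neighbor sum splits as $\sum_{j\sim p_1}f^1_\lambda(j)+\sum_{j\sim p_2}f^2_\lambda(j)$, and by the observation following \rf{10}---an eigenfunction that vanishes at a vertex has vanishing neighbor sum there, which follows from the eigenvalue equation at $p_1$ and $p_2$ since $f^1_\lambda(p_1)=f^2_\lambda(p_2)=0$---each of these two sums is individually $0$. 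Thus $\sum_{j\sim p}f(j)=0=(n_{p_1}+n_{p_2})(1-\lambda)f(p)$, so \rf{10} holds at $p$ despite the altered degree. This completes the verification and makes transparent that the vanishing hypotheses on the eigenfunctions at the identified vertices are exactly what is needed to absorb the degree change at the junction.
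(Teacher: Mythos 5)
Your proof is correct and follows essentially the same route as the paper's: the key point in both is the observation after \rf{10} that an eigenfunction vanishing at a vertex has vanishing neighbor sum there, applied at $p_1$ and $p_2$ so that the glued function satisfies the eigenvalue equation at the junction despite the changed degree. Your write-up merely makes explicit the verifications (well-definedness, unchanged degrees away from the junction, nontriviality) that the paper leaves implicit.
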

\begin{proof}
We observe from \rf{10} that for an  eigenfunction $f_\lambda$ whenever
$f_\lambda(q)=0$ at some $q$, then also $\sum_{s\sim
  q}f_\lambda(s)=0$. This applies to $p_1$ and $p_2$, and therefore,
we can also join the eigenfunctions on the two components.
\end{proof}

This includes the case where  either $f^1_\lambda$ or  $f^2_\lambda$ is
identically 0. \\
{\bf Example:} A triangle, that is, a complete graph of 3 vertices
$i_1,i_2,i_3$, possesses the eigenvalue 3/2 with multiplicity 2. An
eigenfunction $f_{3/2}$ vanishes at one of the vertices, say $f_{3/2}(i_1)=0$
and takes the values +1 and -1, resp., at the two other ones. Thus,
when a triangle is joined at one vertex to another graph, the
eigenvalue 3/2 is kept. For instance (see \cite{Chung}), the petal graph, that is, a
graph where $m$ triangles are joined at a single vertex, has the
eigenvalue 3/2 with multiplicity $m+1$ (here, $m$ of these eigenvalues
are obtained via the described construction, and the remaining
eigenfunction has the value $-2$ at the central vertex where all the
triangles are joined and 1 at all other ones).\\

Also, when the condition of Theorem \ref{th5} is satisfied at several pairs of vertices,
we can form more bonds by vertex identifications between the two graphs.\\
For the eigenvalue 1, the situation is even better: We need not
require  $f^1_\lambda(p_1)=0$ and $f^2_\lambda(p_2)=0$, but only
$f^1_\lambda(p_1)=f^2_\lambda(p_2)$ to make the joining construction
work. 
 \\

\section{Examples}
A chain of $m$ vertices (that is, where we have an edge between
$p_j$ and $p_{j+1}$ for $j=1,\dots ,m-1$), by the lemma and node
doubling,  possesses the eigenvalue 1 (with
multiplicity 1) iff $m$ is odd, with eigenfunction
$f_1(p_1)=1,f_1(p_2)=0,f_1(p_3)=-1,f_1(p_4)=0,\dots $. Similarly, a
closed chain (that is, where we add an edge between $p_m$ and $p_1$) 
possesses the eigenvalue 1 (with multiplicity 2) iff $m$ is a
multiple of 4.\\
Local operations like adding an edge may increase or  decrease $m_1$
or leave it invariant. Adding a pending vertex to a chain of length 2
increases $m_1$ from 0 to 1, adding a pending vertex to closed chain
of length 3, a triangle, leaves $m_1=0$, adding a pending vertex to a
closed chain of length 4, a quadrangle, reduces $m_1$ from 2 to
1 (see \cite{Bev} for general results in this direction). Similarly, closing a chain by adding an edge between the first and
last vertex may increase, decrease or leave $m_1$ the same.\\
In any case, the question of the eigenvalue 1 is not a local one. Take  closed chains of lengths
$4k-1$ and $4\ell +1$. Neither of them supports the eigenvalue 1, but
if we join them at a single point (that is, we take a point $p_0$ in
the first and a point $q_0$ in the second graph and form a new graph
by identifying $p_0$ and $q_0$), the resulting graph has 1 as an
eigenvalue. An eigenfunction has the value 1 at the joined node, and
the values $\pm 1$ occurring always in neighboring pairs in the rest
of the chains, where the two neighbors of $p_0$ in the first chain
both get the value $-1$, and the ones in the second chain the value
1.\\

\section{Construction of graphs with eigenvalue 1 from given data}
Let $f$ be an integer valued function on the vertices of the graph
$\Gamma$. We define the excess of $p\in \Gamma$ as 
\bel{40}
e(p):=\sum_{q \sim p}f(q).
\qe
Thus, $f$ is an eigenfunction for the eigenvalue iff $e(p)=0$ for all
$p$.\\
We are going to show that we can construct graphs $\Gamma$ and functions $f$ with
the property that $e(p)=0$ except for one single vertex $p_0$ where
the pair $(f(p),e(p))$ assumes any prescribed integer values
$(n,m)$. These will be assembled from elementary building blocks. 
\begin{enumerate}
\item A triangle with a function $f$ that takes the value $-1$ at two
  vertices and the value 1 at the third vertex, our $p_0$, realizes
  the pair $(1,-2)$.
\item The same triangle, with a pending vertex, our new $p_0$, connected to the vertex
  with value 1, and given the value 2, realizes $(2,1)$.
\item Joining instead $\ell$ triangles at a single vertex, our $p_0$, with value 1,
  assigning $-1$ to all the other vertices as before, yields $(1,-2\ell)$.
\item A pentagon, i.e., a closed chain of 5 vertices, with value $-1$ at
  two adjacent vertices and 1 at the remaining three, the middle one
  of which is our $p_0$, realizes $(1,2)$.
\item Similarly, adding a pending vertex, again our new $p_0$,
  connected to the former $p_0$ in the pentagon, and assigned the
  value $-2$, realizes $(-2,1)$.
\item Likewise, joining $\ell$ such pentagons instead at $p_0$ yields
  $(1,2\ell)$.
\item In general, connecting a pending vertex as the new $p_0$ to the
  former $p_0$ changes $(n,m)$ to $(-m,n)$.
\item In general, joining the $p_0$s from graphs with values $(n,m_1),\dots
  (n,m_k)$ yields $(n,\sum_1^k m_j)$.
\end{enumerate}
Thus, from the triangle and the pentagon, by adding pending vertices
and graph joining, we can indeed realize all integer pairs $(n,m)$.
\begin{theorem}
Let $\Sigma$ be a graph, $f$ an integer valued function on its
vertices. We can then construct a graph $\Gamma$ containing the motif
$\Sigma$ with  eigenvalue 1 and an eigenfunction coinciding with $f$
on $\Sigma$.
\end{theorem}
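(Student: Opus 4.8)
The plan is to reduce the statement to the realizability of arbitrary integer pairs $(n,m)$ that was just established by the enumerated building blocks. Because we are working with the eigenvalue $1$, the eigenvalue equation \rf{10} collapses to the degree-free condition \rf{11}: a function is an eigenfunction for $1$ exactly when its excess $e(p)=\sum_{q\sim p}f(q)$ vanishes at \emph{every} vertex. On $\Sigma$ alone the excess need not vanish, so for each $p\in\Sigma$ write $e_\Sigma(p):=\sum_{q\in\Sigma,\,q\sim p}f(q)$ for the excess computed within $\Sigma$. The idea is to cancel each $e_\Sigma(p)$ locally by grafting onto $p$ a small correcting gadget that contributes precisely the missing excess while pinning the value at $p$ to $f(p)$ and leaving all of its own vertices balanced.

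First I would, for each vertex $p\in\Sigma$, invoke the preceding list: since the triangle and the pentagon, together with pending-vertex attachment (item 7) and vertex joining (item 8), realize every integer pair, I can build a graph $G_p$ with a distinguished vertex whose pair $(f,e)$ equals $(f(p),\,-e_\Sigma(p))$ and with $e=0$ at all its other vertices. I then form $\Gamma$ by taking $\Sigma$ together with all the $G_p$, pairwise disjoint, and identifying, for each $p$, the distinguished vertex of $G_p$ with $p$. No edge is added between two vertices of $\Sigma$, so $\Sigma$ sits inside $\Gamma$ as a motif, carrying exactly its original edges.

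It remains to check that $f$, extended by the gadget values, is balanced on all of $\Gamma$. At an interior gadget vertex the excess is $0$ by the choice of $G_p$. At a vertex $p\in\Sigma$ its neighbors split into the $\Sigma$-neighbors and the $G_p$-neighbors, so the total excess is $e_\Sigma(p)+(-e_\Sigma(p))=0$, using that the identification is consistent because the distinguished vertex was assigned value $f(p)$. Hence $e(p)=0$ everywhere, and by \rf{11} the extended $f$ is an eigenfunction of $\Gamma$ for the eigenvalue $1$ that agrees with $f$ on $\Sigma$.

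The one point that needs care---and the real content of the argument---is that the local corrections do not interfere with one another. Since the gadgets are pairwise disjoint and each meets $\Sigma$ only in the single vertex it is grafted onto, the neighborhood in $\Gamma$ of any vertex decomposes cleanly, and no cross-terms arise between gadgets grafted at different places. Crucially, enlarging the degree of a vertex by attaching a gadget is harmless here precisely because at the eigenvalue $1$ only the neighboring \emph{values} enter \rf{11}, not the degrees; thus the $\Sigma$-excesses $e_\Sigma(q)$ are unchanged by the enlargement, and each is annihilated by its own $G_q$. Granting the realizability of arbitrary integer pairs asserted just before the theorem, the construction then goes through; and if a connected $\Gamma$ is wanted, one notes that each $G_p$ is connected and meets the connected $\Sigma$, so $\Gamma$ is connected as well.
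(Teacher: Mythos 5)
Your proposal is correct and follows essentially the same route as the paper's own proof: at each vertex $p\in\Sigma$ one attaches a building-block graph realizing the pair $(f(p),-e(p))$, which cancels the excess at $p$ while keeping every other vertex balanced, so that \rf{11} holds on all of $\Gamma$. Your write-up merely makes explicit what the paper's one-line proof leaves implicit (the non-interference of gadgets grafted at different vertices, and connectivity), and both arguments rely in exactly the same way on the preceding assertion that every integer pair $(n,m)$ is realizable by the triangle/pentagon constructions.
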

\begin{proof}
At each $p \in \Sigma$, we attach a graph realizing the pair
$(f(p),-e(p))$. This ensures \rf{10} at $p$.
\end{proof}

The preceding constructions also tell us how $m_1$, the multiplicity
of the eigenvalue 1, behaves when we modify a graph $\Gamma'$,
consisting possibly  of two disjoint components $\Gamma_1,
\Gamma_2$, by either identifying vertices or by joining vertices by
new edges. The graph resulting from these operations will be called
$\Gamma$. We consider two cases:
\begin{enumerate}
\item We identify the vertex $p_j$  with 
  $q_j$ for $ j=1,\dots ,m$, assuming that they do not have common neighbors. Then
  \begin{enumerate}
\item We can generate  an eigenfunction on $\Gamma$ whenever we find a
  function $g$ on $\Gamma'$  with vanishing excess except
  possibly at the joined points where we require
\bel{61}
g(p_j)=g(q_j)\text{ and }e_{g}(p_j)=-e_{g}(q_j)\text{ for }
j=1,\dots ,m.
\qe
  \item As a special case of \rf{61}, an eigenfunction $f^{\Gamma'}_1$ 
    produces an eigenfunction $f^\Gamma_1$ whenever 
\bel{60} f^{\Gamma'}_1(p_j)=f^{\Gamma'}_1(q_j) \text{ for }
j=1,\dots ,m.
\qe
In the case where $\Gamma'$ consists of two disjoint components
$\Gamma_1,\Gamma_2$, this includes the case where that value is 0 for all $j$ and 
$f^{\Gamma'}_1$ vanishes identically on one of the components. In other words, we can extend an
eigenfunction from $\Gamma_1$, say, to the rest of the graph by 0
whenever that function vanishes at all joining points. \\
Since in general, \rf{60} cannot be satisfied for a basis of
eigenfunctions, by this process, we can only
expect to generate fewer than $m^{\Gamma'}_1$ linearly
independent eigenfunctions on $\Gamma$. 

  \end{enumerate}
Whether $m^\Gamma_1$ is larger or smaller than $m^{\Gamma'}_1$ then depends on the balance between these two
processes, that is, how many eigenfunctions satisfy \rf{60} vs. how
many new eigenfunctions can be produced by functions satisfying
\rf{61} with nonvanishing excess at some of the joined vertices.
\item We connect the vertex $p_j$  by an edge with 
  $q_j$  for $ j=1,\dots ,m$. Then
  \begin{enumerate}
\item We can generate  eigenfunctions on $\Gamma$ whenever we find a 
  function $g$ on $\Gamma'$ with vanishing excess except
  possibly at the connected points where we require
\bel{63}
g(p_j)=-e_{g}(q_j)\text{ and } g(q_j)=-e_{g}(p_j)\text{ for }
j=1,\dots ,m.
\qe
 \item Again, as a special case of \rf{63}, an eigenfunctions $f^{\Gamma'}_1$ 
    produces an eigenfunction $f^\Gamma_1$ whenever 
\bel{62} f^{\Gamma'}_1(p_j)=0=f^{\Gamma'}_1(q_j) \text{ for }
j=1,\dots ,m.
\qe
This imposes a stronger constraint than in \rf{60} on eigenfunctions
to yield an eigenfunction on $\Gamma$.

\end{enumerate}

\end{enumerate}


\begin{thebibliography}{999}
\bi{Bev} J.Bevis, K.Blount, G.Davis, G.Domke, V.Miller, The rank of a graph after vertex addition, Lin.Alg.Appl.265, 1997, 55--69

\bi{BiLeSt} T.B{\i}y{\i}ko{\u{g}}lu, J.Leydold, P.Stadler, Laplacian eigenvectors
of graphs, Springer LNM, to appear

\bi{Bol} B.Bolob\'as, Modern graph theory, Springer, 1998

\bi{Chung} F.Chung, Spectral graph theory, AMS, 1997



\bibitem{DLS}
G.Gladwell, E.Davies, J.Leydold, and P.Stadler,
 Discrete nodal domain theorems,
 Lin.Alg.Appl.336, 2001, 51-60

\bi{Ell} M.Ellingham, Basic subgraphs and graph spectra, Australas.J.Combin.8, 1993, 247--265

\bi{GoRo} C.Godsil, G.Royle, Algebraic graph theory, Springer, 2001

\bi{IpMi} M. Ipsen, A. S. Mikhailov,
  Evolutionary reconstruction of networks,
  Phys. Rev. E 66(4),
  2002

\bi{JJ1} J. Jost, M. P. Joy,
  Spectral properties and synchronization in coupled map lattices,
  Phys.Rev.E 65(1), 2002
 
\bi{Merris}
  R. Merris, Laplacian matrices of graphs -- a survey,
   Lin. Alg.  Appl.198, 1994, 143-176

\bi{Mohar} B. Mohar,
 Some applications of Laplace eigenvalues of graphs, in: G.Hahn,
 G.Sabidussi (eds.),
 Graph symmetry: Algebraic methods and applications, pp. 227-277, Springer, 1997
 
\bi{ZW} P.Zhu, R.Wilson, A study of graph spectra for comparing graphs  
\end{thebibliography}
\end{document}